\documentclass[reqno]{amsart}

\usepackage[utf8]{inputenc}
\usepackage[T1]{fontenc}
\usepackage{lmodern}
\usepackage[top=3cm,bottom=3cm,left=3cm,right=3cm]{geometry}
\usepackage{amsmath,amssymb,amsthm,mathtools}
\usepackage{enumitem}
\usepackage{xcolor}
\usepackage{hyperref}
\usepackage{microtype}

\hypersetup{
  colorlinks=true,
  linkcolor=blue!60!black,
  citecolor=blue!60!black,
  urlcolor=blue!60!black
}

\numberwithin{equation}{section}

\newtheorem{theorem}{Theorem}[section]
\newtheorem{proposition}[theorem]{Proposition}
\newtheorem{lemma}[theorem]{Lemma}
\newtheorem{corollary}[theorem]{Corollary}

\newtheorem*{maintheorem}{Main Theorem}
\newtheorem*{compactrigidity}{Compact Rigidity}

\theoremstyle{definition}
\newtheorem{definition}[theorem]{Definition}

\theoremstyle{remark}
\newtheorem{remark}[theorem]{Remark}

\newcommand{\R}{\mathbb R}

\newcommand{\Z}{\mathbb Z}
\newcommand{\T}{\mathbb T}

\DeclareMathOperator{\Aut}{Aut}
\DeclareMathOperator{\Aff}{Aff}
\DeclareMathOperator{\GL}{GL}
\DeclareMathOperator{\Lie}{Lie}
\DeclareMathOperator{\MaxK}{MaxK}

\newcommand{\id}{\mathrm{id}}

\begin{document}

\title [Solvability and Rigidity for Topological Skew Braces]
{Solvability and Rigidity for Topological Skew Braces}

\author{Marco Damele}
\address{Dipartimento di Matematica \\
Universit\`a di Cagliari (Italy)}
\email{m.damele4@studenti.unica.it}

\author{Andrea Loi}
\address{Dipartimento di Matematica \\
Universit\`a di Cagliari (Italy)}
\email{loi@unica.it}

\thanks{
The authors are supported by INdAM and GNSAGA -- Gruppo Nazionale per le Strutture Algebriche, Geometriche e le loro Applicazioni.}

\subjclass[2020]{16T25, 22D05, 22E15, 22A05}

\keywords{Topological skew brace; skew left brace; locally compact group; compact group; solvable group; affine action; Byott--Vendramin solvability problem.}

\begin{abstract}
We study compact and locally compact topological analogues of the
Byott--Vendramin solvability problem for finite skew braces: whether
solvability of the additive group forces solvability of the multiplicative
group. Our main result concerns the locally compact setting: if
$B=(B,\cdot,\circ)$ is a connected locally compact Hausdorff topological skew
brace and the additive group $(B,\cdot)$ is solvable, then the multiplicative
group $(B,\circ)$ is solvable. 
The proof combines a structural reduction to a solvable Lie quotient of the
additive group with an affine-action theorem showing that a connected Lie group
acting transitively and affinely on a connected solvable Lie group, with
solvable stabilizer identity component, is itself solvable.
We also show that the Hausdorff, local
compactness, and connectedness assumptions cannot be omitted in general.
Finally, in the compact connected Hausdorff case with abelian additive group,
we prove a stronger rigidity result: the two group laws coincide.
\end{abstract}

\maketitle

\tableofcontents
\section{Introduction}\label{sec:introduction}

Skew braces, introduced by Guarnieri and Vendramin \cite{Guarnieri2017},
provide a natural algebraic framework for the study of set-theoretic solutions
of the Yang--Baxter equation. Since their introduction, they have revealed deep
connections with group theory, Hopf--Galois structures, and various aspects of
solvability; see, for instance, \cite{Smoktunowicz2018,Nasybullov}.

A fundamental question in the theory concerns the interplay between the
algebraic properties of the two underlying groups of a skew brace. In
particular, Question~2.25 in \cite{Smoktunowicz2018}, originating from work of
Byott on Hopf--Galois structures, asks whether, for a finite skew brace,
solvability of the additive group forces solvability of the multiplicative
group. We refer to this as the Byott--Vendramin solvability problem. This
problem is open in general, although it has been verified in several special
cases; see, for instance, \cite{Nasybullov,GorshkovNasybullov}.

In this paper, we study topological analogues of the Byott--Vendramin
solvability problem in the setting of locally compact and compact skew braces.
More precisely, we consider skew braces whose underlying sets are endowed with
compatible topological group structures, and we analyze how topological
assumptions influence the interaction between the additive and multiplicative
groups.

The first main result of the paper is the following, proved as
Theorem~\ref{thm:lc-solvable}.

\begin{maintheorem}
Let $(B,\cdot,\circ)$ be a connected locally compact Hausdorff topological skew
brace. If the additive group $(B,\cdot)$ is solvable, then the multiplicative
group $(B,\circ)$ is solvable.
\end{maintheorem}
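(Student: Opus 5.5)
The plan follows the abstract's outline: first a structural reduction to a solvable Lie quotient of the additive group, then an affine-action theorem.

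\emph{Setting up the affine action.} Write $G=(B,\cdot)$ and $H=(B,\circ)$. The skew brace axiom $a\circ(b\cdot c)=(a\circ b)\cdot a^{-1}\cdot(a\circ c)$ says that $\lambda_a(b)=a^{-1}\cdot(a\circ b)$ defines a homomorphism $\lambda\colon H\to\Aut(G)$. Hence $H$ acts on $G$ by affine maps $b\mapsto a\cdot\lambda_a(b)=a\circ b$. This action is simply transitive and continuous, so $H$ embeds as a regular subgroup of $\Hol(G)=G\rtimes\Aut(G)$. Solvability of $H$ is the statement to be proved.

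\emph{Reduction to Lie groups.} $G$ is a connected locally compact Hausdorff group, so by Gleason--Yamabe it has arbitrarily small compact normal subgroups $K$ with $G/K$ a Lie group. Since $G$ is solvable and connected, its maximal compact subgroups are abelian tori up to a central part, and in fact every compact normal subgroup of a connected solvable locally compact group is central. Rather than an arbitrary $K$, I would pick a characteristic one. The natural candidate is the largest compact normal subgroup $K_0$ of $G$, which exists for almost-connected groups (Iwasawa) and is characteristic. Then $K_0$ is invariant under every $\lambda_a$, and one checks that it is a left ideal and in fact an ideal: normality in $\circ$ should follow because $a\circ K_0\circ a^{-1}$ is the image of $K_0$ under a continuous affine map and is still compact and normal. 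This gives a quotient skew brace $B/K_0$ whose additive group is a connected solvable Lie group $L$ with no nontrivial compact normal subgroup.

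\emph{Handling the kernel.} The ideal $(K_0,\circ)$ is a compact connected group whose additive group is compact, connected and solvable, hence abelian (a torus). I would apply Compact Rigidity: $K_0$ is a compact connected Hausdorff skew brace with abelian additive group, so $\circ=\cdot$ on $K_0$, which is therefore abelian. Since extensions of solvable groups are solvable, it suffices to prove that $(B/K_0,\circ)$ is solvable.

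\emph{Lie case.} Now $H'=(B/K_0,\circ)$ is a connected locally compact group acting continuously, simply transitively and affinely on $L$. It is therefore a Lie group, being homeomorphic to a manifold (or via the image in the Lie group $\Hol(L)$). The stabilizers are trivial, so the affine-action theorem stated earlier applies: a connected Lie group acting transitively and affinely on a connected solvable Lie group with solvable stabilizer identity component is solvable. Hence $H'$ is solvable, and so is $H$.

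\emph{Main obstacle.} The main obstacle is the reduction step, namely showing that $K_0$ (or a suitable characteristic compact normal subgroup) is a two-sided ideal, and that $\lambda$ is continuous into $\Aut(L)$ as a Lie group. If ideal-ness of $K_0$ in $(B,\circ)$ is hard, I would instead pass to the closed $\circ$-normal core, or apply the affine-action argument to the action of $H$ on $G/K_0$, which is transitive with stabilizer $K_0$ (abelian by Compact Rigidity). That yields solvability of the image of $H$ in $\Hol(L)$ modulo a compact solvable kernel.
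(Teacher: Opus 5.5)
Your main line fails at the claim that $K_0$ is an ideal, i.e.\ normal in $(B,\circ)$. The justification does not work: conjugation in $(B,\circ)$ is not an automorphism of $(B,\cdot)$. So a $\circ$-conjugate of $K_0$ need not even be a subgroup of $(B,\cdot)$, and maximality of $K_0$ cannot be invoked.

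In fact the claim is false. Let $B=\T\times\R^2$ with $(s,v)\cdot(t,w)=(s+t,v+w)$ and $(s,v)\circ(t,w)=(s+t,v+R_s w)$, where $R_s$ is rotation by the angle $2\pi s$. Then $\lambda_{(s,v)}(t,w)=(t,R_s w)$, so this is a connected locally compact Hausdorff topological left brace with $K_0=\T\times\{0\}$. Here $(B,\circ)\cong\R^2\rtimes\T$ is the group of orientation-preserving motions of the plane, and $K_0$ is its rotation subgroup, which is not normal. Since $(0,-v)$ is the $\circ$-inverse of $(0,v)$, one gets $(0,v)\circ(s,0)\circ(0,-v)=(s,v-R_s v)\notin K_0$ for $s\neq 0$, $v\neq 0$. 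So there is no quotient brace $B/K_0$, and the simply transitive ``Lie case'' is never reached. The group $(B,\circ)$ acts on $L=(B,\cdot)/K_0$ only transitively, with point stabilizer $K_0$. This is exactly why the paper proves Proposition~\ref{prop:affine-general} for transitive actions with solvable stabilizer.

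Your fallback, letting $(B,\circ)$ act on $L$ by $a\ast xK_0=(a\circ x)K_0$, is precisely the paper's proof and it does work, but two of your justifications must be replaced. Let $N$ be the kernel of the resulting continuous homomorphism $(B,\circ)\to\Aff(L)$. It is exactly the $\circ$-normal core of $K_0$ you mention, so $N\subseteq K_0$. The continuity of $a\mapsto\overline{\lambda}_a$ into the Lie group $\Aut(L)$, which you flag, is standard, since the Lie topology on $\Aut(L)$ agrees with the compact-open one.

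First, $Q=(B,\circ)/N$ is not homeomorphic to $L$, so ``homeomorphic to a manifold'' is unavailable. Use your other remark instead: $Q$ injects continuously into the Lie group $\Aff(L)$, hence has no small subgroups, and is Lie by Gleason--Yamabe. The paper cites Hofmann--Kramer at this point instead.

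Second, do not derive commutativity of $(K_0,\circ)$ from Compact Rigidity applied to the sub-brace $K_0$, because that needs $K_0$ connected. Connectedness is not automatic: in $\R^2\rtimes\T$ with $\T$ acting through $z\mapsto z^2$, the largest compact normal subgroup is $\{(0,\pm 1)\}$. Instead, as the paper does, let the \emph{connected} group $(B,\circ)$ act via $\lambda$ on the compact group $K_0$. It is abelian because it is central, as you observe. Lemma~\ref{lem:pontryagin-rigidity} then gives $\lambda_a|_{K_0}=\id$ for all $a\in B$, so $\circ=\cdot$ on $K_0$.

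Then the stabilizer $K_0/N$ of $eK_0$ in $Q$ is abelian, and Proposition~\ref{prop:affine-general} makes $Q$ solvable. Since $N\subseteq K_0$ is abelian, $(B,\circ)$ is solvable.
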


The proof relies on a reduction to Lie theory. Using the structure theory of
connected locally compact groups, we pass to a Lie quotient of the additive
group and interpret the skew brace structure in terms of affine actions. 
This allows us to apply the affine-action result proved in
Proposition~\ref{prop:affine-general}, which states that a connected Lie group
acting transitively and affinely on a connected solvable Lie group, with
solvable stabilizer identity component, is itself solvable.
The assumptions in Theorem~\ref{thm:lc-solvable} are essential. More precisely,
the Hausdorff assumption is addressed in Proposition~\ref{prop:indiscrete} and
Corollary~\ref{cor:nasybullov}, local compactness in
Proposition~\ref{prop:banach-counterexample}, and connectedness in
Theorem~\ref{thm:counterexample-compact}.

A second ingredient of the paper is a simple Pontryagin-duality rigidity
principle: a connected topological group cannot act non-trivially by continuous
automorphisms on a compact Hausdorff abelian group. This lemma is used both in
the proof of the locally compact solvability theorem and in the proof of the
following compact rigidity result, proved as
Theorem~\ref{thm:compact-rigidity}.

\begin{compactrigidity}
Let $(B,\cdot,\circ)$ be a compact connected Hausdorff topological skew brace.
If $(B,\cdot)$ is abelian, then
\[
a\circ b=a\cdot b
\qquad (a,b\in B).
\]
\end{compactrigidity}

Thus, in the compact connected Hausdorff case with abelian additive group, no
nontrivial topological skew brace structure exists.

The paper is organized as follows. In Section~\ref{sec:preliminaries} we recall
basic notions on topological skew braces and compact connected solvable groups.
In Section~\ref{sec:affine-actions} we prove Proposition~\ref{prop:affine-general}.
Section~\ref{sec:locally-compact} contains the Pontryagin-duality rigidity
lemma, the proof of the compact rigidity theorem, and the proof of
Theorem~\ref{thm:lc-solvable}. In Section~\ref{sec:necessity} we show that the
Hausdorff, local compactness, and connectedness assumptions cannot be omitted.

\section{Preliminaries on topological groups and skew braces}\label{sec:preliminaries}

We recall the basic definitions and several standard results that will be used throughout the paper. For the group-theoretic aspects of skew braces, we refer the reader to \cite{Guarnieri2017}; for the theory of Lie skew braces, see \cite{DameleLoi2026}.

\subsection{Topological skew braces}

A \emph{skew left brace} is a triple
\[
B=(B,\cdot,\circ),
\]
where $(B,\cdot)$ and $(B,\circ)$ are groups on the same underlying set, such
that
\[
a\circ (b\cdot c)
=
(a\circ b)\cdot a^{-1}\cdot (a\circ c)
\qquad (a,b,c\in B),
\]
where $a^{-1}$ denotes the inverse of $a$ in the group $(B,\cdot)$.
If $(B,\cdot)$ is abelian, one speaks simply of a \emph{left brace}.

A \emph{topological skew brace} is a skew brace
\[
B=(B,\cdot,\circ)
\]
such that both $(B,\cdot)$ and $(B,\circ)$ are
topological groups, with the same underlying topology.

The two group structures have the same identity element. We denote it by $e$.
Associated with every skew brace is the map
\[
\lambda\colon (B,\circ)\longrightarrow \Aut(B,\cdot),
\qquad
\lambda_a(b):=a^{-1}\cdot(a\circ b).
\]
The brace identity is equivalent to saying that each $\lambda_a$ is an
automorphism of $(B,\cdot)$ and that $\lambda$ is a group homomorphism. Moreover,
one has
\[
a\circ b=a\cdot \lambda_a(b)
\qquad (a,b\in B).
\]
If $B$ is a topological skew brace, then the action
\[
(B,\circ)\times (B,\cdot)\longrightarrow (B,\cdot),
\qquad
(a,b)\longmapsto \lambda_a(b),
\]
is continuous.

\subsection{Compact connected solvable groups}

\begin{lemma}\label{lem:compact-connected-solvable-abelian}
Let $K$ be a compact connected Hausdorff topological group. If $K$ is solvable,
then $K$ is abelian.
\end{lemma}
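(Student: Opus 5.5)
The plan is to combine the structure theory of compact connected groups with the elementary fact that a connected compact group's commutator subgroup is semisimple. By the standard structure theorem (Hofmann--Morris), for a compact connected Hausdorff group $K$ the closure of the commutator subgroup, $\overline{[K,K]}$, is a compact connected \emph{semisimple} group. This means its abelianization is trivial, $\overline{[K',K']}=K'$ where $K'=\overline{[K,K]}$. Moreover $K = Z_0(K)\,K'$, with $Z_0(K)$ the identity component of the center. In fact the algebraic commutator subgroup $[K,K]$ is already closed (Gotô's theorem), so $[K,K]=K'$ and $[K',K']=K'$.

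The argument then proceeds in two steps.

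First, I would show that the commutator subgroup is perfect. If one prefers to avoid Gotô's theorem, use closures: set $K^{(1)}=\overline{[K,K]}$. This is perfect in the topological sense, $\overline{[K^{(1)},K^{(1)}]}=K^{(1)}$.

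Second, I would use solvability to force triviality. Let $K^{(n)}$ denote the algebraic derived series. Since $K$ is solvable, $K^{(n)}=\{e\}$ for some $n$. Now argue by induction that $K^{(1)}\subseteq \overline{K^{(m)}}$ for all $m\ge 1$. The key observation is that closures commute with commutators up to closure: $\overline{[\overline{A},\overline{A}]}=\overline{[A,A]}$ for a subgroup $A$, by continuity of the commutator map. Then
\[
\overline{K^{(m+1)}}=\overline{[K^{(m)},K^{(m)}]}=\overline{[\overline{K^{(m)}},\overline{K^{(m)}}]}\supseteq \overline{[K^{(1)},K^{(1)}]}=K^{(1)}.
\]
Taking $m=n$ gives $K^{(1)}\subseteq\overline{\{e\}}=\{e\}$, using that $K$ is Hausdorff. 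Hence $[K,K]$ is trivial, so $K$ is abelian.

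The only nontrivial input, and the step I expect to be the main obstacle if one insists on proving it rather than citing it, is the topological perfectness of $\overline{[K,K]}$. An alternative route is reduction to Lie groups. By Peter--Weyl, $K$ is a projective limit of compact connected Lie groups $K/N$, and each of these is solvable. A compact connected solvable Lie group has a solvable compact Lie algebra $\mathfrak k=\mathfrak z\oplus[\mathfrak k,\mathfrak k]$ with $[\mathfrak k,\mathfrak k]$ semisimple. Solvability forces $[\mathfrak k,\mathfrak k]=0$, so $K/N$ is a torus. Then $K$ embeds into $\prod K/N$, which is abelian, and this conclusion needs only the Hausdorff property, since the intersection of the $N$ is trivial. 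I would present this Peter--Weyl argument as the proof, since it relies only on the classical facts that compact Lie algebras are reductive and that compact groups have enough finite-dimensional unitary representations.
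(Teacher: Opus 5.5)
Your argument is correct, but it takes a different and more explicit route than the paper. The paper's proof is simply a citation of \cite[Theorem~9.33]{MorrisCompactGroups}.

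Your first route derives the lemma from the topological perfectness of $\overline{[K,K]}$. The induction $\overline{[K,K]}\subseteq\overline{K^{(m)}}$, based on $\overline{[\overline A,\overline A]}=\overline{[A,A]}$, is sound and avoids Got\^o's commutator theorem. You should rename $\overline{[K,K]}$, though, since you also use $K^{(1)}$ for the algebraic derived series.

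Your preferred Peter--Weyl route is more self-contained. Any commutator $[x,y]$ lies in the kernel $N$ of every finite-dimensional unitary representation, and these kernels intersect trivially. So it suffices that each compact connected solvable Lie group $K/N$ is abelian. That follows from $\mathfrak k=\mathfrak z\oplus[\mathfrak k,\mathfrak k]$, once you know that abstract solvability of a connected Lie group $G$ forces solvability of $\mathfrak g$. You assert this last fact without comment, and it deserves one line: for connected $G$, $[G,G]$ is the analytic subgroup with Lie algebra $[\mathfrak g,\mathfrak g]$, so by iteration $G^{(k)}$ has Lie algebra $\mathfrak g^{(k)}$.

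The paper's citation buys brevity. Your argument buys transparency about exactly where the Hausdorff hypothesis enters: points are closed in the first route, and representations separate points in the second. That hypothesis is genuinely needed here. As in Proposition~\ref{prop:indiscrete}, any non-abelian solvable group with the indiscrete topology is compact, connected and solvable.
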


\begin{proof}
This is a standard theorem on compact connected groups; see
\cite[Theorem~9.33]{MorrisCompactGroups}.
\end{proof}

\begin{lemma}\label{lem:max-compact-connected-solvable}
Let $G$ be a connected locally compact Hausdorff solvable group, and let
\[
C:=\MaxK(G)
\]
be its largest compact normal subgroup. Then $C$ is characteristic and
connected. Consequently, $C$ is abelian and $G/C$ is a connected solvable Lie
group.
\end{lemma}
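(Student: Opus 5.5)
The plan is to combine the Gleason--Yamabe / Iwasawa structure theory of connected locally compact groups with Lemma~\ref{lem:compact-connected-solvable-abelian}. First we need the largest compact normal subgroup to exist. By the Gleason--Yamabe theorem (in the form due to Iwasawa for connected locally compact groups), $G$ has compact normal subgroups $N$ with $G/N$ a Lie group. A connected Lie group has only finitely many components-worth of room for compact normal subgroups: in $G/N$, the compact normal subgroups are contained in a unique maximal one. We pull this back to $G$ to obtain a unique largest compact normal subgroup $C=\MaxK(G)$. Concretely, the product of two compact normal subgroups is again compact and normal, and the resulting directed family has compact closure of its union; this closure is controlled in the Lie quotient $G/N$, where compact normal subgroups satisfy a descending/ascending chain property. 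Uniqueness forces $C$ to be characteristic, since any topological automorphism $\alpha$ sends compact normal subgroups to compact normal subgroups, so $\alpha(C)\subseteq C$ and likewise $\alpha^{-1}(C)\subseteq C$.

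The main obstacle is connectedness of $C$. I would argue as follows. The identity component $C_0$ is characteristic in $C$, hence normal in $G$. In $G/C_0$ the image $C/C_0$ is a compact totally disconnected normal subgroup. Since $G$ is connected, it acts on this normal subgroup by conjugation. The claim is that this action is trivial, because a connected group acting continuously by automorphisms on a profinite group acts trivially: each orbit is connected in a totally disconnected space, so it is a point. Hence $C/C_0$ is central in $G/C_0$. Next, $G/C$ is a connected Lie group, so $G/C_0$ is an extension of a Lie group by a compact central totally disconnected subgroup. I would then show that $C/C_0$ is in fact finite. Choose a compact open subgroup $U$ of the profinite group $C/C_0$ (and one small enough) together with the no-small-subgroups property of a Lie quotient $G/N$; this makes $G/(C_0\cdot\text{small})$ a Lie group. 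In a connected Lie group, a discrete central subgroup consisting of compact elements lies in the maximal compact subgroup of the center, and the central extension then splits off a larger compact normal subgroup unless it is already accounted for. From this one deduces that $C/C_0$ is a finite central subgroup $F$. Finally, $C_0\cdot$(the preimage of the torus-part) absorbs $F$: in a connected Lie group $H=G/C_0$, a finite central subgroup $F$ that is the maximal compact normal subgroup must lie in a compact connected normal subgroup. This is the delicate point. If this fails, I fall back to the standard reference (Iwasawa; Hofmann--Morris, \emph{The Lie theory of connected pro-Lie groups}), which states that $\MaxK(G)$ is connected for connected $G$ whenever $G/\MaxK(G)$ has no nontrivial compact normal subgroups.

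With $C$ characteristic and connected, the remaining claims are immediate. $C$ is a compact connected subgroup of the solvable group $G$, hence solvable, so Lemma~\ref{lem:compact-connected-solvable-abelian} makes $C$ abelian. The quotient $G/C$ is connected and solvable as a quotient of $G$. It is a Lie group because, by Iwasawa's theorem, $G/N$ is Lie for some compact normal $N\subseteq C$, and $G/C$ is a quotient of the Lie group $G/N$ by the compact normal subgroup $C/N$.
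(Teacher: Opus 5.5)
\textbf{The connectedness step has a genuine gap, and it cannot be closed: the claim is false.} Your treatment of existence, of the characteristic property, and of the Lie property of $G/C$ is essentially fine. (The paper simply cites \cite[Theorem~8.36]{HofmannMorrisProLie} for existence and for $G/C$ being Lie.)

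Connectedness already fails for connected solvable Lie groups. For $n\ge 2$ let $G_n=\mathbb{C}\rtimes(\R/2\pi n\Z)$ with $(v,t)(w,s)=(v+e^{it}w,\,t+s)$. This is the $n$-fold cover of the rigid motion group $E=\mathbb{C}\rtimes(\R/2\pi\Z)$ of the plane.
\begin{itemize}
\item $G_n$ is connected and solvable.
\item A direct computation gives $Z(G_n)=\{(0,2\pi k)\}\cong\Z/n\Z$, which is exactly the kernel of the covering map $G_n\to E$.
\item If $K\le G_n$ is compact and normal, so is its image in $E$. Conjugating $(v,\theta)$ with $e^{i\theta}\ne 1$ by $(w,0)$ gives $(v+(1-e^{i\theta})w,\theta)$, that is, all of $\mathbb{C}\times\{\theta\}$. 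So the image lies in $\mathbb{C}\times\{0\}\cong\R^2$, hence is trivial, and therefore $K\subseteq Z(G_n)$.
\end{itemize}
Hence $\MaxK(G_n)=Z(G_n)\cong\Z/n\Z$, which is disconnected. This is a finite central $F=C/C_0$ (with $C_0=1$) that lies in no compact connected normal subgroup, so your final absorption step fails.

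Your fallback citation does not help either. $G/\MaxK(G)$ never has nontrivial compact normal subgroups, so the statement you quote would make $\MaxK(G)$ connected for every connected $G$. That already fails for $\mathrm{SL}(2,\R)$, where $\MaxK=\{\pm I\}$.

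The paper's own proof also gets connectedness only by citation (\cite[Theorem~8.37]{HofmannMorrisProLie}). The example shows this clause of the lemma must be dropped. With it goes the step, used by both you and the paper, that deduces abelianness from connectedness via Lemma~\ref{lem:compact-connected-solvable-abelian}.

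\textbf{What survives, and how to prove it.} What is true, and is all that Theorem~\ref{thm:lc-solvable} actually uses, is that $C$ is abelian, indeed central. You already have most of the ingredients.
\begin{itemize}
\item The identity component $C_0$ is compact, connected and solvable, hence abelian by Lemma~\ref{lem:compact-connected-solvable-abelian}. It is normal in $G$, so Lemma~\ref{lem:pontryagin-rigidity} applied to conjugation makes it central.
\item Your orbit argument shows that $C/C_0$ is central in $G/C_0$. Hence $\beta(g,x)=gxg^{-1}x^{-1}$ maps $G\times C$ continuously into $C_0$. Because $C_0$ is central, $\beta$ is multiplicative in each variable and trivial on $G\times C_0$.
\item Fix a character $\chi$ of $C_0$. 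By compactness of $C$, the map $g\mapsto\chi\circ\beta(g,\cdot)$ is continuous from the connected group $G$ into the discrete dual of the compact abelian group $C/C_0$. It is therefore constantly trivial.
\item Since characters separate the points of $C_0$, we get $\beta\equiv 1$.
\end{itemize}
Thus $C\subseteq Z(G)$, and the lemma holds with \emph{connected} replaced by \emph{central}.
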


\begin{proof}
The existence of the largest compact normal subgroup \(C\), and the fact that
\(G/C\) is a connected Lie group, are standard in the structure theory of
connected locally compact groups; see
\cite[Theorem~8.36]{HofmannMorrisProLie}.

Since \(C\) is the largest compact normal subgroup of \(G\), every automorphism
of \(G\) preserves \(C\). Hence \(C\) is characteristic.
By the structure theory of connected locally compact solvable groups, the
maximal compact normal subgroup of \(G\) is connected; see
\cite[Theorem~8.37]{HofmannMorrisProLie}. Thus
$C=C^0$.
Since \(G\) is solvable, the subgroup \(C\) is solvable. Therefore, by
Lemma~\ref{lem:compact-connected-solvable-abelian}, the compact connected
solvable group \(C\) is abelian.
Finally, \(G/C\) is connected because it is the continuous image of the
connected group \(G\), and it is solvable because it is a quotient of the
solvable group \(G\). Hence \(G/C\) is a connected solvable Lie group.
\end{proof}

\section{Affine actions on solvable Lie groups}\label{sec:affine-actions}
The goal of this section is to prove Proposition~\ref{prop:affine-general},
which will be the Lie-theoretic input in the proof of
Theorem~\ref{thm:lc-solvable}.

In the sense of \cite{DameleLoi2026}, an affine action of a Lie group on
another Lie group is described by a homomorphism into the corresponding affine
group
\[
\Aff(L)=L\rtimes \Aut(L).
\]
We adopt this terminology throughout.

\begin{definition}\label{def:affine-action}
Let $H$ and $L$ be connected Lie groups. A \emph{smooth affine action} of $H$
on $L$ is a Lie group homomorphism
\[
\rho\colon H\longrightarrow \Aff(L)=L\rtimes \Aut(L).
\]
Writing
\[
\rho(h)=(t_h,\phi_h)\in L\rtimes \Aut(L),
\]
the induced action of $H$ on $L$ is given by
\[
h\cdot x:=t_h\cdot \phi_h(x)
\qquad (h\in H,\ x\in L).
\]
The action is called \emph{transitive} if for every $x,y\in L$ there exists
$h\in H$ such that $h\cdot x=y$, and \emph{simply transitive} if such an
element $h$ is unique.
\end{definition}

\begin{remark}\label{rem:damele-loi}
This agrees with the notion of affine action used by Damele--Loi \cite{DameleLoi2026}. In the
presence of a Lie skew brace, the associated affine action is simply transitive.
In the present section, however, we work in the more general setting of
transitive affine actions, allowing non-trivial stabilizers.
\end{remark}

\begin{remark}\label{rem:holonomy}
For an affine action
\[
\rho\colon H\to \Aff(L)=L\rtimes \Aut(L),
\]
we denote by
\[
\operatorname{hol}\colon \Aff(L)\longrightarrow \Aut(L),
\qquad
(t,\phi)\longmapsto \phi,
\]
the natural projection onto the automorphism factor. The subgroup
\[
\operatorname{hol}(\rho(H))=(\operatorname{hol}\circ \rho)(H)\subseteq \Aut(L)
\]
is called the \emph{holonomy group} of the action.
\end{remark}

\begin{definition}[Strong unipotent radical]
Let $H$ be a real linear algebraic group, and let $u(H)$ denote its unipotent
radical. We say that $H$ has a \emph{strong unipotent radical} if
\[
Z_H\bigl(u(H)\bigr)\subseteq u(H).
\]
\end{definition}

\begin{definition}[Real algebraic hull]\label{def:real-algebraic-hull}
Let $L$ be a connected simply connected solvable Lie group.
A \emph{real algebraic hull} of $L$ is a solvable-by-finite real linear
algebraic group $H_L$, i.e. a real linear algebraic group whose identity
component $H_L^\circ$ is solvable, with strong unipotent radical, together
with a continuous injective homomorphism
\[
L\hookrightarrow H_L
\]
whose image is Zariski-dense and such that
\[
\dim u(H_L)=\dim L.
\]\end{definition}

\begin{definition}[$d$-subgroups and algebraic automorphisms]
Let $H$ be a solvable-by-finite real linear algebraic group.
A \emph{$d$-subgroup} of $H$ is a Zariski-closed subgroup consisting of
semisimple elements. A \emph{maximal $d$-subgroup} is a $d$-subgroup maximal
with respect to inclusion. We write $\Aut_a(H)$ for the group of algebraic
automorphisms of $H$.
\end{definition}

\begin{lemma}[Baues' structural results]\label{lem:baues-input}
Let $L$ be a connected simply connected solvable Lie group, let
$L\hookrightarrow H_L$
be a fixed Zariski-dense inclusion into its real algebraic hull, and put
$U:=u(H_L)$.
Then the following hold.

\begin{enumerate}[label=\textup{(\alph*)}]
\item
If $T\leq H_L$ is a maximal $d$-subgroup, then $H_L=U\rtimes T$
and there is an algebraic projection
\[
\tau_T\colon H_L\longrightarrow U,
\qquad
\tau_T(ut)=u
\qquad (u\in U,\ t\in T).
\]

\item The subgroup $L$ is closed and normal in $H_L$, and for every maximal
$d$-subgroup $T\leq H_L$ one has $H_L=L\rtimes T$.
Moreover, the restriction of $\tau_T$ to $L$ is a diffeomorphism
\[
\tau:=\tau_T|_L\colon L\longrightarrow U.
\]

\item The natural extension homomorphism identifies $\Aut(L)$ with a
Zariski-closed subgroup of $\Aut_a(H_L)$.

\item
If $R\leq \Aut_a(H_L)$ is a reductive subgroup, then there exists a maximal
$d$-subgroup $T\leq H_L$ stabilized by $R$.

\item Let $\Delta\leq \Aff(L)$ be a subgroup. Assume that
$\operatorname{hol}(\Delta)$ stabilizes a maximal $d$-subgroup $T\leq H_L$.
Then the diffeomorphism
\[
\tau=\tau_T|_L\colon L\longrightarrow U
\]
conjugates the natural action of $\Delta$ on $L$ to an affine action on $U$.
\end{enumerate}

\begin{proof}
Part \textup{(a)} is \cite[Proposition~2.1]{Baues}.
Part \textup{(b)} is \cite[Proposition~2.3]{Baues}.
Part \textup{(c)} is \cite[Proposition~2.5]{Baues}.
Part \textup{(d)} is \cite[Lemma~3.7]{Baues}.
Part \textup{(e)} is \cite[Lemma~3.6]{Baues}.
\end{proof}
\end{lemma}

\begin{lemma}\label{lem:semisimple-zariski-reductive}
Let $S$ be a connected semisimple Lie group, let $G$ be a real linear
algebraic group, and let
\[
\psi\colon S\longrightarrow G
\]
be a continuous homomorphism. Then the Zariski closure
$\overline{\psi(S)}^{\,Z}$
is a connected reductive algebraic subgroup of $G$.
\end{lemma}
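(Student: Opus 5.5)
The plan is to pass to the Lie algebra level, where semisimplicity gives complete reducibility, and then come back to groups. Let $H:=\overline{\psi(S)}^{\,Z}$ be the Zariski closure. I will establish three things in turn.

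\textbf{Connectedness.} Since $S$ is connected, $\psi(S)$ is an irreducible (path-)connected subset of $G$ in the Euclidean topology. Consider the Zariski identity component $H^\circ$. It has finite index in $H$ and is Zariski closed. It is also open in the Euclidean topology of $H$, being a union of finitely many cosets that are closed. Hence $\psi(S)\cap H^\circ$ is open and closed in $\psi(S)$. It contains $e$, so by connectedness it is all of $\psi(S)$. Thus $\psi(S)\subseteq H^\circ$, and hence $H=H^\circ$ is Zariski connected.

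\textbf{Reductivity.} Here I use a faithful algebraic representation $G\hookrightarrow \GL(V)$, so it suffices to show that $H$ acts completely reducibly on $V$. Indeed, an algebraic group in characteristic zero with a faithful completely reducible representation has trivial unipotent radical, because the unipotent radical acts trivially on each irreducible summand by the Lie–Kolchin/Engel argument. The differential $d\psi\colon \mathfrak s\to \mathfrak{gl}(V)$ makes $V$ a module over the semisimple Lie algebra $\mathfrak s$. By Weyl's theorem, $V$ is completely reducible as an $\mathfrak s$-module. Since $S$ is connected, $\mathfrak s$-invariant subspaces coincide with $\psi(S)$-invariant subspaces. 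The stabilizer of a subspace is Zariski closed, so these coincide with $H$-invariant subspaces. Hence $V$ is completely reducible under $H$, and $u(H)=1$.

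\textbf{Expected obstacle.} The delicate point is the passage between real algebraic groups and their real points. Reductivity should be read as $u(H)$ being trivial, which can be checked over $\mathbb C$ after complexifying. Weyl's theorem applies equally to $\mathfrak s\otimes\mathbb C$ acting on $V\otimes\mathbb C$, and invariance under a Zariski-dense subgroup passes to the complex Zariski closure. Alternatively, I can note that the unipotent radical of $H$ is a normal unipotent subgroup whose Lie algebra is an ideal of $\operatorname{Lie}(H)$. This Lie algebra is the algebraic hull of $d\psi(\mathfrak s)$, and one checks that it equals $d\psi(\mathfrak s)$ plus a central toral part, since semisimple Lie algebras of matrices are algebraic; that route also yields reductivity. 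I will adopt the complete-reducibility argument as the main line.
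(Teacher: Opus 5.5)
Your proposal is correct and follows the same route as the paper. Both embed $G$ in $\GL(V)$, apply Weyl's theorem to the differential of the semisimple group, pass invariant subspaces from $\psi(S)$ to its Zariski closure $H$, and conclude that $H$ acts completely reducibly and so has trivial unipotent radical, with connectedness of $H$ coming from connectedness of $S$. You make explicit two steps the paper leaves implicit: the Kolchin argument for $u(H)=1$ and the clopen argument for $\psi(S)\subseteq H^\circ$.

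There are two harmless slips. First, $H^\circ$ is Euclidean-open because its \emph{complement} in $H$ is a finite union of closed cosets, not because $H^\circ$ itself is such a union. Second, in your aside there is no ``central toral part'': a semisimple linear Lie algebra is algebraic, so it is exactly the Lie algebra of the Zariski closure.
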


\begin{proof}
Choose a faithful algebraic representation
\[
\iota\colon G\hookrightarrow \GL(V),
\]
and set $\rho:=\iota\circ\psi$.
Let $H:=\overline{\rho(S)}^{\,Z}$.

Since $S$ is connected, $H$ is connected.
The representation $d\rho$ of $\Lie(S)$ is completely reducible, hence so is
$\rho$. It follows that $H$ acts semisimply on $V$, and therefore $H$ is
reductive. The claim follows.
\end{proof}

Auslander's theorem \cite{Auslander} shows that every Lie group admitting a
simply transitive affine action is solvable. The following proposition extends
this conclusion to transitive affine actions.

\begin{proposition}\label{prop:affine-general}
Let $L$ be a connected solvable Lie group, and let
\[
\rho\colon H\longrightarrow \Aff(L)
\]
be a smooth affine action of a connected Lie group $H$ on $L$.
Assume that the action is transitive and that the identity component of one
point stabilizer is solvable. Then $H$ is solvable.
\end{proposition}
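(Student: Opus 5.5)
Argue by contradiction via a Levi decomposition: if $H$ is not solvable, a nontrivial semisimple Levi factor acts affinely on $L$, and the Baues machinery turns this into a polynomial action on a unipotent group, where it must have a fixed point. That forces it into the stabilizer, contradicting solvability of the stabilizer's identity component.

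\textbf{Step 1 (reduction).} Write $H=R\cdot S$ with $R$ the solvable radical and $S$ a connected semisimple Levi subgroup. It suffices to show $S$ is trivial. Suppose not. Replacing $S$ by a suitable quotient and $\rho$ accordingly, we may assume $\rho(S)$ is nontrivial; if $\rho(S)$ were trivial, $S$ would lie in the kernel, which fixes every point and hence lies in the stabilizer. Also pass to the universal cover $\tilde L$ of $L$. Affine maps of $L$ lift to affine maps of $\tilde L$, with translation parts lifted and automorphisms lifted. A connected cover $\tilde H$ of $H$ acts transitively on $\tilde L$, and the identity component of a stabilizer is still solvable, since it covers a finite-index part of the original one up to discrete pieces. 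So we may assume $L$ is simply connected and fix its real algebraic hull $H_L$ with $U=u(H_L)$.

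\textbf{Step 2 (linearizing the holonomy of $S$).} The holonomy $\operatorname{hol}\circ\rho|_S$ maps $S$ into $\Aut(L)$. By Lemma~\ref{lem:baues-input}(c), $\Aut(L)$ is a Zariski-closed subgroup of $\Aut_a(H_L)$. By Lemma~\ref{lem:semisimple-zariski-reductive}, the Zariski closure $R_S$ of $\operatorname{hol}(\rho(S))$ is reductive. Lemma~\ref{lem:baues-input}(d) then gives a maximal $d$-subgroup $T$ stabilized by $R_S$. By Lemma~\ref{lem:baues-input}(e), applied with $\Delta=\rho(S)$, the diffeomorphism $\tau\colon L\to U$ conjugates the action of $S$ to an affine action on the simply connected nilpotent group $U$. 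Via $\exp$, $U$ is a real vector space with polynomial group law, and $\Aff(U)$ acts by polynomial maps of bounded degree.

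\textbf{Step 3 (fixed point; the main obstacle).} We must show a connected semisimple group acting affinely on $U$ has a fixed point. The plan is to induct on a central series $U=U_1\supset U_2\supset\cdots\supset U_k=1$ of characteristic subgroups. On each abelian quotient $U_i/U_{i+1}$ the induced action is affine in the vector-space sense. Finding a fixed coset then amounts to the vanishing of a $1$-cocycle $S\to V$ into a finite-dimensional representation. This follows from Whitehead's first lemma, $H^1(\mathfrak s,V)=0$ for semisimple $\mathfrak s$, together with connectedness of $S$. The fixed coset is a translate of $U_{i+1}$ preserved by $S$, and translating reduces to an affine action on $U_{i+1}$. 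The delicate point is checking that each step stays within affine actions on the smaller group, which uses that the $U_i$ are characteristic.

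\textbf{Step 4 (conclusion).} Conclude that $S$ fixes a point $x_0\in L$. By transitivity, stabilizers of different points are conjugate in $H$, so $S$ is contained in a stabilizer whose identity component is solvable. Since $S$ is connected, it lies in that identity component. A connected semisimple subgroup of a solvable group is trivial, contradicting Step 1. Hence $H=R$ is solvable.
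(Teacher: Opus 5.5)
Your proof is correct. Steps 1, 2 and 4 follow the paper's proof closely: lift to $\widetilde H$ acting on $\widetilde L$, take a Levi factor $S$, use Lemma~\ref{lem:semisimple-zariski-reductive} and Lemma~\ref{lem:baues-input}(c)--(e) to transport the $S$-action to an affine action on $U=u(H_{\widetilde L})$, then place $S$ inside a stabilizer.

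The genuine difference is Step~3. The paper takes the Zariski closure of $\overline\rho(S)$ in $\Aff(U)$ and shows it is reductive by a second use of Lemma~\ref{lem:semisimple-zariski-reductive}. It then cites Der\'e's Lemma~5.3, which says reductive subgroups of $\Aff(U)$ fix a point. You instead prove the fixed point directly. You descend a characteristic central series, kill the translational $1$-cocycle on each vector-group layer $U_i/U_{i+1}$ by Whitehead's first lemma plus connectedness, and recenter by a left translation. Your bookkeeping is right: after recentering, the maps $(t,\phi)$ have $t\in U_i$ and $\phi\in\Aut(U)$. They therefore still preserve $U_{i+1}$, because $U_{i+1}$ is characteristic in $U$, not merely in $U_i$.

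The citation buys brevity and covers arbitrary reductive groups. Your route is self-contained and needs only $H^1(\mathfrak s,V)=0$. Your induction also never uses nilpotency of $U$. Run it along the derived series of $\widetilde L$, whose terms are closed, connected, simply connected, characteristic in their predecessor, and have vector-group quotients. It then yields the fixed point of $S$ directly on $\widetilde L$, so Step~2 and the whole Baues apparatus could be dropped.

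Two minor points. The ``suitable quotient'' aside in Step~1 is unnecessary, since a trivially acting $S$ fixes every point and is already handled by Step~4. The stabilizer-lifting claim is better stated as in the paper: $\pi$ restricted to $\widetilde H^{\,0}_{\widetilde x}$ has discrete kernel, so its Lie algebra embeds in that of $H_x^0$. Your explicit appeal to conjugacy of stabilizers under transitivity, which the paper leaves implicit, is a welcome addition.
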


\begin{proof}
Let
\[
p\colon \widetilde L\to L
\qquad\text{and}\qquad
\pi\colon \widetilde H\to H
\]
be the universal covering homomorphisms.

We first lift the affine action to the universal covers. Write
\[
\rho(h)=(t_h,\phi_h)\in L\rtimes \Aut(L).
\]
Every automorphism of \(L\) lifts uniquely to an automorphism of the simply
connected Lie group \(\widetilde L\) fixing the identity. Moreover, every left
translation of \(L\) lifts to a left translation of \(\widetilde L\). Since
\(\widetilde H\) is simply connected, the lifted affine transformations assemble
to a Lie group homomorphism
\[
\widetilde\rho\colon \widetilde H\longrightarrow \Aff(\widetilde L)
\]
such that
\[
p(\widetilde h\cdot \widetilde x)
=
\pi(\widetilde h)\cdot p(\widetilde x)
\qquad
(\widetilde h\in\widetilde H,\ \widetilde x\in\widetilde L).
\]
The lifted action is smooth, affine, and transitive.

Fix \(\widetilde x\in \widetilde L\), and put \(x=p(\widetilde x)\). Let
\(\widetilde H_{\widetilde x}\) and \(H_x\) denote the stabilizers of
\(\widetilde x\) and \(x\), respectively. The homomorphism \(\pi\) maps
\(\widetilde H_{\widetilde x}\) into \(H_x\), and hence induces
$\pi_{\widetilde x}\colon
\widetilde H_{\widetilde x}\longrightarrow H_x$.
Passing to identity components gives
\[
\pi_{\widetilde x}^0\colon
\widetilde H_{\widetilde x}^{\,0}
\longrightarrow
H_x^0.
\]
Since \(\ker\pi\) is discrete and central in \(\widetilde H\), the kernel of
\(\pi_{\widetilde x}^0\) is discrete and central in
\(\widetilde H_{\widetilde x}^{\,0}\). 
By assumption, $H_x^0$ is solvable. Since $\ker\pi$ is discrete, the induced
map on Lie algebras identifies
$\Lie(\widetilde H_{\widetilde x}^{\,0})$
with a Lie subalgebra of $\Lie(H_x^0)$. Hence
$\widetilde H_{\widetilde x}^{\,0}$ is solvable.

Let \(S\) be a Levi subgroup of \(\widetilde H\). We prove that \(S\) is
trivial. Since \(\widetilde L\) is connected, simply connected, and solvable,
it admits a real algebraic hull. Therefore we may apply
Lemma~\ref{lem:baues-input} to \(\widetilde L\).
Consider the restricted affine action
\[
\widetilde\rho|_S\colon S\longrightarrow \Aff(\widetilde L)
=
\widetilde L\rtimes \Aut(\widetilde L).
\]
Let
\[
K:=\operatorname{hol}(\widetilde\rho(S))
\subseteq \Aut(\widetilde L)
\]
be its holonomy group. By Lemma~\ref{lem:baues-input}\textup{(c)}, we view
\(K\) as a subgroup of \(\Aut_a(H_{\widetilde L})\), where
\(H_{\widetilde L}\) is the real algebraic hull of \(\widetilde L\).

By Lemma~\ref{lem:semisimple-zariski-reductive}, the Zariski closure
\[
R:=\overline{K}^{\,Z}
\subseteq \Aut_a(H_{\widetilde L})
\]
is a connected reductive algebraic subgroup. Hence, by
Lemma~\ref{lem:baues-input}\textup{(d)}, there exists a maximal \(d\)-subgroup
$T\leq H_{\widetilde L}$
which is stabilized by \(R\), and therefore by \(K\).

Let
\[
U:=u(H_{\widetilde L})
\]
be the unipotent radical of \(H_{\widetilde L}\). By
Lemma~\ref{lem:baues-input}\textup{(b)}, the projection associated with the
decomposition
\[
H_{\widetilde L}=U\rtimes T
\]
restricts to a diffeomorphism
\[
\tau\colon \widetilde L\longrightarrow U.
\]
By Lemma~\ref{lem:baues-input}\textup{(e)}, this diffeomorphism conjugates the
affine action of \(\widetilde\rho(S)\) on \(\widetilde L\) to an affine action
on \(U\). Thus we obtain a homomorphism
\[
\overline\rho\colon S\longrightarrow \Aff(U),
\qquad
\overline\rho(s)=\tau\circ\widetilde\rho(s)\circ\tau^{-1}.
\]

Let
\[
R_0:=\overline{\overline\rho(S)}^{\,Z}
\leq \Aff(U)
\]
be the Zariski closure of \(\overline\rho(S)\). Again by
Lemma~\ref{lem:semisimple-zariski-reductive}, the group \(R_0\) is connected
and reductive. By \cite[Lemma~5.3]{Dere}, every reductive subgroup of
\(\Aff(U)\) fixes a point of \(U\). Therefore there exists \(u_0\in U\) such
that
\[
\overline\rho(s)(u_0)=u_0
\qquad
(s\in S).
\]

Set
\[
\widetilde x_0:=\tau^{-1}(u_0)\in \widetilde L.
\]
Since \(\tau\) conjugates the two actions, \(S\) fixes \(\widetilde x_0\).
In other words,
\[
s\cdot \widetilde x_0=\widetilde x_0
\qquad (s\in S).
\]
Hence
$S\leq \widetilde H_{\widetilde x_0}^{\,0}$.
But the identity component of every stabilizer for the lifted action is
solvable. Therefore \(S\) is solvable. Since \(S\) is also semisimple, it must
be trivial.
Hence
\(\widetilde H\) has trivial Levi subgroup, and therefore \(\widetilde H\) is
solvable. Finally, \(H\) is a quotient of \(\widetilde H\), so \(H\) is
solvable.
\end{proof}

\begin{remark}
In the application to Theorem~\ref{thm:lc-solvable}, the stabilizer is in fact
abelian. The more general solvable-stabilizer formulation of
Proposition~\ref{prop:affine-general} is nevertheless natural, since the proof
only requires that any Levi subgroup contained in a stabilizer be trivial.
\end{remark}

\section{The locally compact case and compact rigidity}\label{sec:locally-compact}

We now apply the affine-action result of the previous section to a suitable
Lie quotient of the additive group. We first clarify why such a quotient is
needed.

\begin{remark}
A connected locally compact Hausdorff topological group need not be a Lie
group; for instance,
\[
\prod_{n\ge 1}\mathbb T
\]
is compact, connected and Hausdorff, but not locally Euclidean.

By contrast, additional hypotheses may force the Lie property. For example,
a connected locally compact Hausdorff group admitting a faithful continuous
finite-dimensional linear representation is Lie. Similarly, a non-trivial
topologically simple connected locally compact Hausdorff group is Lie, by the
Gleason--Yamabe theorem.
\end{remark}

In the proof of our main result (Theorem  \ref{thm:lc-solvable})
we need the following lemma.

\begin{lemma}\label{lem:pontryagin-rigidity}
Let $G$ be a connected topological group, let $A$ be a compact Hausdorff
abelian topological group, and let
\[
\alpha\colon G\times A\longrightarrow A
\]
be a continuous action of $G$ on $A$ by topological automorphisms. Then the
action is trivial.
\end{lemma}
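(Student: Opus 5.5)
The plan is to pass to the Pontryagin dual $\widehat A=\operatorname{Hom}_{\mathrm{cont}}(A,\T)$. Since $A$ is compact Hausdorff abelian, $\widehat A$ is a discrete abelian group and characters separate the points of $A$. Each $g\in G$ acts on $A$ by the topological automorphism $\alpha_g:=\alpha(g,\cdot)$, and this induces a dual action on $\widehat A$ given by $g\cdot\chi:=\chi\circ\alpha_{g}^{-1}$. This is a left action by group automorphisms of $\widehat A$. It therefore suffices to show that $\chi\circ\alpha_g=\chi$ for every $\chi\in\widehat A$ and every $g\in G$. Indeed, if this holds, then $\chi(\alpha_g(a))=\chi(a)$ for all characters $\chi$, and point separation gives $\alpha_g(a)=a$.

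Fix $\chi\in\widehat A$ and consider the orbit map $G\to\widehat A$, $g\mapsto \chi\circ\alpha_g$. The key step is to show that this map is continuous when $\widehat A$ carries the compact-open topology, which is discrete because $A$ is compact. Once this is known, the image of the connected space $G$ is a connected subset of a discrete space, hence a single point. That point is $\chi\circ\alpha_e=\chi$, so $\chi\circ\alpha_g=\chi$ for all $g$, as required.

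The main obstacle is this continuity, which is where joint continuity of $\alpha$ and compactness of $A$ enter. I would argue it directly, since discreteness means we must show that the orbit map is locally constant. Fix $g_0\in G$ and set $\psi:=\chi\circ\alpha_{g_0}$. The map $(g,a)\mapsto \chi(\alpha(g,a))\overline{\psi(a)}$ is continuous on $G\times A$ and equals $1$ on $\{g_0\}\times A$. By compactness of $A$ (the tube lemma), for any neighbourhood $W$ of $1$ in $\T$ there is a neighbourhood $V$ of $g_0$ such that $\chi\circ\alpha_g\cdot\overline{\psi}$ takes values in $W$ for all $g\in V$. Choose $W$ to be an arc around $1$ containing no nontrivial subgroup of $\T$; this is the "no small subgroups" property of $\T$. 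For $g\in V$ the map $\chi\circ\alpha_g\cdot\overline\psi$ is a character of $A$, because $\alpha_g$ is an automorphism, and its image is a subgroup of $\T$ contained in $W$, hence trivial. Thus $\chi\circ\alpha_g=\psi$ on $V$, so the orbit map is locally constant. Since $G$ is connected, it is constant. This argument avoids any need to quote that $\widehat A$ is discrete and works directly from the continuity of $\alpha$.
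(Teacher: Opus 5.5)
Your proof is correct and follows the same route as the paper: pass to $\widehat A$, show that $g\mapsto\chi\circ\alpha_g$ is constant because $G$ is connected, and conclude with point separation by characters. Your tube-lemma and no-small-subgroups argument proves local constancy directly, which makes explicit what the paper only asserts, namely that this map is continuous into the discrete dual.
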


\begin{proof}
Let $\widehat A$ be the Pontryagin dual of $A$. Since $A$ is compact Hausdorff
abelian, $\widehat A$ is discrete, and continuous characters separate points of
$A$ \cite[Chapter~I]{MorrisCompactGroups}.

Fix $\chi\in\widehat A$. The map
\[
G\longrightarrow \widehat A,
\qquad
g\longmapsto \chi\circ \alpha_g,
\]
is continuous, where $\widehat A$ is viewed as a discrete subgroup of
$C(A,\T)$ with the compact-open topology. Since $G$ is connected, this map is
constant. Evaluating at the identity gives
\[
\chi\circ \alpha_g=\chi
\qquad
(g\in G).
\]
Since characters separate points, $\alpha_g=\id_A$ for every $g\in G$.
\end{proof}

As an immediate consequence of this rigidity lemma, we obtain the following
compact connected Hausdorff rigidity result.

\begin{theorem}\label{thm:compact-rigidity}
Let $B=(B,\cdot,\circ)$ be a compact connected Hausdorff topological skew brace.
Assume that the additive group $(B,\cdot)$ is abelian. Then
\[
a\circ b=a\cdot b
\qquad
(a,b\in B).
\]
In particular, $(B,\circ)$ is abelian, and the two topological group structures
coincide.
\end{theorem}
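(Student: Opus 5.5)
The plan is to apply Lemma~\ref{lem:pontryagin-rigidity} to the $\lambda$-action of the multiplicative group on the additive group. We take $G:=(B,\circ)$ and $A:=(B,\cdot)$. Here $G$ is a connected topological group: it has the same underlying topology as $B$, which is connected. Also, $A$ is a compact Hausdorff abelian topological group by hypothesis.

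The action is
\[
\alpha\colon G\times A\to A,\qquad \alpha(a,b)=\lambda_a(b)=a^{-1}\cdot(a\circ b).
\]
We then check the hypotheses of the lemma.
\begin{enumerate}[label=\textup{(\roman*)}]
\item Each $\lambda_a$ is an automorphism of $(B,\cdot)$, and $\lambda$ is a homomorphism $(B,\circ)\to\Aut(B,\cdot)$, as recalled in Section~\ref{sec:preliminaries}. So $\alpha$ is a group action by automorphisms.
\item The map $(a,b)\mapsto a^{-1}\cdot(a\circ b)$ is jointly continuous $B\times B\to B$. This follows because inversion in $(B,\cdot)$, the multiplication $\circ$, and the multiplication $\cdot$ are all continuous for the common topology. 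As noted in the preliminaries, this is exactly the continuity of the action.
\item Each $\lambda_a$ is therefore a continuous automorphism. Its inverse $\lambda_{\bar a}$, where $\bar a$ is the $\circ$-inverse of $a$, is also continuous, so each $\lambda_a$ is a topological automorphism.
\end{enumerate}

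Lemma~\ref{lem:pontryagin-rigidity} then gives $\lambda_a=\id_B$ for every $a\in B$. Substituting into the identity $a\circ b=a\cdot\lambda_a(b)$ yields $a\circ b=a\cdot b$ for all $a,b\in B$. Hence the two group laws coincide. In particular, $(B,\circ)=(B,\cdot)$ is abelian, and since the topologies are the same by definition, the two topological group structures coincide.

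There is no real obstacle. The only point requiring care is that the lemma needs joint continuity of the action together with connectedness of the acting group. Both come directly from the definition of a topological skew brace, because $(B,\circ)$ and $(B,\cdot)$ share the same connected topology.
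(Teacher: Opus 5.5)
Your proposal is correct and matches the paper's proof: apply Lemma~\ref{lem:pontryagin-rigidity} to the $\lambda$-action of the connected group $(B,\circ)$ on the compact Hausdorff abelian group $(B,\cdot)$ to get $\lambda_a=\id$, then substitute into $a\circ b=a\cdot\lambda_a(b)$. Your checks of joint continuity and that each $\lambda_a$ is a topological automorphism (with inverse $\lambda_{\bar a}$) just spell out what the paper asserts in one line.
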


\begin{proof}
Since $(B,\cdot)$ is abelian, the lambda-map
\[
\lambda\colon (B,\circ)\longrightarrow \Aut(B,\cdot),
\qquad
\lambda_a(b)=a^{-1}\cdot(a\circ b),
\]
is a continuous action of $(B,\circ)$ on the compact Hausdorff abelian group
$(B,\cdot)$ by topological automorphisms. Moreover,
\[
a\circ b=a\cdot\lambda_a(b)
\qquad
(a,b\in B).
\]

Since $(B,\circ)$ is connected, Lemma~\ref{lem:pontryagin-rigidity} implies
that
\[
\lambda_a=\id_{(B,\cdot)}
\qquad
(a\in B).
\]
Therefore
\[
a\circ b=a\cdot\lambda_a(b)=a\cdot b
\qquad
(a,b\in B).
\]
\end{proof}

\begin{remark}
The compactness assumption in Theorem~\ref{thm:compact-rigidity} is essential:
the example constructed in Proposition~\ref{prop:banach-counterexample} is
connected and Hausdorff, has abelian additive group, but has non-solvable
multiplicative group. 
\end{remark}

We are now ready to prove the main result of the paper.

\begin{theorem}\label{thm:lc-solvable}
Let $B=(B,\cdot,\circ)$ be a connected locally compact Hausdorff topological
skew brace. If the additive group $(B,\cdot)$ is solvable, then the
multiplicative group $(B,\circ)$ is solvable.
\end{theorem}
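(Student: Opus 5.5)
Set $G:=(B,\cdot)$ and $C:=\MaxK(G)$. By Lemma~\ref{lem:max-compact-connected-solvable}, $C$ is characteristic, compact, connected and abelian, and $L:=G/C$ is a connected solvable Lie group. The plan has four steps: push the skew brace structure down to $L$, show that $(B,\circ)$ acts transitively and affinely on $L$ through a Lie quotient, apply Proposition~\ref{prop:affine-general} to that quotient, and finally recover solvability of $(B,\circ)$ from the quotient and a kernel controlled by Lemma~\ref{lem:pontryagin-rigidity}.

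\emph{Step 1: descend to $L$.} Each $\lambda_a$ is a topological automorphism of $G$, hence preserves $C$ and induces $\bar\lambda_a\in\Aut(L)$. We let $(B,\circ)$ act on $L$ by
\[
a\ast(xC):=a\,\lambda_a(x)\,C=(a\circ x)C.
\]
This is well defined since $\lambda_a(C)=C$. It is an action because $\lambda$ is a homomorphism and $a\circ(b\circ x)=(a\circ b)\circ x$. It is continuous, and it is transitive since $a\ast(eC)=aC$. In terms of Definition~\ref{def:affine-action}, it is given by $a\mapsto (aC,\bar\lambda_a)\in L\rtimes\Aut(L)$.

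\emph{Step 2: obtain a Lie group acting on $L$.} $(B,\circ)$ is connected and locally compact but need not be Lie, so we cannot feed it directly to Proposition~\ref{prop:affine-general}. Let $N$ be the kernel of the continuous homomorphism $\rho\colon (B,\circ)\to\Aff(L)$. Then $\rho$ factors through $H:=(B,\circ)/N$, which injects continuously into the Lie group $\Aff(L)$. I expect this to be the main obstacle, namely showing that $H$ is a connected Lie group and that the induced map is a Lie homomorphism. I would try two routes.
\begin{itemize}
\item Use the Iwasawa/Gleason--Yamabe structure: choose a compact normal $K\le (B,\circ)$ with $(B,\circ)/K$ Lie, and note that $K$ acts on $L$ through a compact group. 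Then argue that the kernel of $\rho$ is closed and that $H$ has no small subgroups, because it injects continuously into a Lie group. A locally compact group with no small subgroups is Lie.
\item If needed, replace $H$ by the closure of $\rho(B)$, a connected Lie subgroup of $\Aff(L)$ that still acts transitively.
\end{itemize}

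\emph{Step 3: identify the stabilizer and apply Proposition~\ref{prop:affine-general}.} The stabilizer of $eC$ is the image of $\{a:aC=C\}=C$. So it is a continuous image of the compact connected abelian group $C$ restricted to $(B,\circ)$, hence abelian, and in particular its identity component is solvable. If we pass to a closure as in Step~2, we need the stabilizer there to be compact too; it is the image of a compact set, so it is closed. Proposition~\ref{prop:affine-general} then gives that $H$ is solvable.

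\emph{Step 4: the kernel.} It remains to show that $N$ is solvable. An element $a\in N$ satisfies $aC=C$, so $N\subseteq C$ as sets. On $C$ the group law $\circ$ is given by $a\circ b=a\lambda_a(b)$, and $\lambda_a$ acts on $C$ by automorphisms. Now $(B,\circ)$ is connected and $C$ is compact abelian, so Lemma~\ref{lem:pontryagin-rigidity} shows that $\lambda_a|_C=\id$ for all $a$. Hence $\circ$ and $\cdot$ agree on $C$, so $(N,\circ)$ is a subgroup of the abelian group $C$ and is abelian. Therefore $(B,\circ)$ is an extension of the solvable group $H$ by the abelian group $N$, and so it is solvable.
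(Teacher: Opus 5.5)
Your proposal is correct and has the same architecture as the paper's proof. Both use $C=\MaxK(B,\cdot)$ and $L=(B,\cdot)/C$, the affine map $a\mapsto(aC,\bar\lambda_a)$, the inclusion $N\subseteq C$, Lemma~\ref{lem:pontryagin-rigidity} to make $\circ$ and $\cdot$ agree on $C$, the stabilizer $C/N$, Proposition~\ref{prop:affine-general}, and the abelian-by-solvable extension at the end.

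The genuine difference is in the step you flagged as the main obstacle: showing that $Q=(B,\circ)/N$ is a Lie group. The paper cites Theorem~B of Hofmann--Kramer, which uses that $Q$ acts continuously, faithfully and transitively on the locally contractible space $L$. Your argument is more elementary. Since $N$ is closed, $Q$ is locally compact Hausdorff. If $V\subseteq\Aff(L)$ is an identity neighbourhood containing no nontrivial subgroup, then, because $\bar\rho$ is injective, $\bar\rho^{-1}(V)$ contains no nontrivial subgroup of $Q$. Hence $Q$ has no small subgroups and is Lie by Gleason--Yamabe, and smoothness of $\bar\rho$ is then automatic. This needs only a continuous injection into a Lie group, not transitivity or local contractibility. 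The paper's route, by contrast, is a one-line citation of a deeper theorem.

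That one argument settles Step~2, so some of what surrounds it should go:
\begin{itemize}
\item The detour through a compact normal $K\le(B,\circ)$ is superfluous.
\item The closure fallback should be dropped. The stabilizer of $eC$ in $\overline{\rho(B)}$ equals $\rho(C)$ exactly when $\rho(B)$ is closed, and closedness of $\rho(C)$ alone does not give that. It does hold here, by compactness of $C/N$ and the open mapping theorem, but that is an extra argument you would have to supply.
\end{itemize}

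Finally, the abelianness of the stabilizer in Step~3 relies on the Pontryagin argument of Step~4, so present that argument first.
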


\begin{proof}
Let \(C\) be the largest compact normal subgroup of \((B,\cdot)\).
By Lemma~\ref{lem:max-compact-connected-solvable}, \(C\) is compact,
connected, characteristic in \((B,\cdot)\), abelian, and
\[
L:=(B,\cdot)/C
\]
is a connected solvable Lie group.

Since \(C\) is characteristic in \((B,\cdot)\), every automorphism
\(\lambda_a\) preserves \(C\). Hence each \(\lambda_a\) induces an automorphism
\[
\overline{\lambda}_a\in \Aut(L),
\qquad
\overline{\lambda}_a(xC)=\lambda_a(x)C.
\]
Define
\[
j\colon (B,\circ)\longrightarrow \Aff(L),
\qquad
j(a)=\bigl(aC,\overline{\lambda}_a\bigr).
\]
Equivalently, \(j(a)\) acts on \(L\) by
\[
j(a)(xC)=aC\cdot \overline{\lambda}_a(xC)=(a\circ x)C.
\]
This is well defined because \(C\) is \(\lambda\)-invariant. Moreover, the map
\(j\) is a continuous homomorphism, since the translation part
\(a\mapsto aC\) and the automorphism part \(a\mapsto\overline{\lambda}_a\) are
continuous.

Let
\[
N:=\ker j.
\]
Since \(j\) is continuous and \(\Aff(L)\) is Hausdorff, \(N\) is closed in
\((B,\circ)\). Moreover, if \(a\in N\), then
\[
eC=j(a)(eC)=aC,
\]
so \(a\in C\). Hence \(N\subseteq C\).

We now show that \(N\) is abelian. Since \(C\) is characteristic in
\((B,\cdot)\), the restriction of the lambda map gives a continuous action
\[
(B,\circ)\times C\longrightarrow C,
\qquad
(a,c)\longmapsto \lambda_a(c),
\]
of the connected group \((B,\circ)\) on the compact Hausdorff abelian group
\(C\) by topological automorphisms. By Lemma~\ref{lem:pontryagin-rigidity},
this action is trivial. Thus
\[
\lambda_a|_C=\id_C
\qquad (a\in B).
\]
In particular, for \(c,d\in C\),
\[
c\circ d=c\cdot \lambda_c(d)=c\cdot d.
\]
Therefore \(C\) is a subgroup of \((B,\circ)\), the two group laws coincide on
\(C\), and \(C\) is abelian as a subgroup of \((B,\circ)\). Since
\(N\subseteq C\), it follows that \(N\) is abelian.

Put
\[
Q:=(B,\circ)/N.
\]
Then \(Q\) is a connected locally compact Hausdorff group, and \(j\) induces a
continuous injective homomorphism
\[
\overline{\rho}\colon Q\longrightarrow \Aff(L).
\]
The induced action of \(Q\) on \(L\) is given by
\[
aN\ast xC=(a\circ x)C.
\]
It is faithful and continuous. It is also transitive, because
\[
aN\ast eC=(a\circ e)C=aC,
\]
and every element of \(L\) is of the form \(aC\).

Since \(L\) is a connected Lie group, it is locally contractible. Therefore, by
Theorem~B of Hofmann--Kramer \cite{HofmannKramer}, the group \(Q\) is a Lie
group. Since both \(Q\) and \(\Aff(L)\) are Lie groups, the continuous
homomorphism
\[
\overline{\rho}\colon Q\longrightarrow \Aff(L)
\]
is automatically smooth. Hence \(\overline{\rho}\) is a smooth transitive
affine action of the connected Lie group \(Q\) on the connected solvable Lie
group \(L\).

We compute the stabilizer of
\[
o:=eC\in L.
\]
If \(aN\in Q\) fixes \(o\), then
\[
o=aN\ast o=(a\circ e)C=aC.
\]
Thus \(aC=eC\), so \(a\in C\). Conversely, if \(a\in C\), then
\[
aN\ast o=(a\circ e)C=aC=eC=o.
\]
Therefore the stabilizer of \(o\) in \(Q\) is exactly \(C/N\). Since \(C\) is
abelian as a subgroup of \((B,\circ)\), the group \(C/N\) is abelian, hence
solvable. In particular, the identity component of one point stabilizer is
solvable.

By Proposition~\ref{prop:affine-general}, \(Q\) is solvable. Finally, from the
exact sequence
\[
1\longrightarrow N
\longrightarrow (B,\circ)
\longrightarrow Q
\longrightarrow 1
\]
with abelian kernel and solvable quotient, we conclude that \((B,\circ)\) is
solvable.
\end{proof}

\section{Necessity of the assumptions}\label{sec:necessity}

We now show that the hypotheses in Theorem~\ref{thm:lc-solvable} are essential.
More precisely, the conclusion may fail if one drops Hausdorffness, local
compactness, or connectedness.

\subsection{Failure without the Hausdorff assumption}\label{subsec:non-hausdorff}

Without the Hausdorff assumption, even the compact connected topological
analogue is false.

\begin{proposition}\label{prop:indiscrete}
Let $B=(B,\cdot,\circ)$ be an abstract skew brace such that $(B,\cdot)$ is
solvable and $(B,\circ)$ is not solvable. Endow $B$ with the indiscrete
topology
\[
\tau=\{\varnothing,B\}.
\]
Then $(B,\tau,\cdot,\circ)$ is a compact connected non-Hausdorff topological
skew brace.
\end{proposition}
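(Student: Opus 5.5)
The plan is to check the definition of a topological skew brace directly. The algebraic skew brace axiom is part of the hypothesis, so only the topological conditions remain, together with compactness, connectedness, and the failure of the Hausdorff property. In the indiscrete topology $\tau=\{\varnothing,B\}$, every map from any topological space into $B$ is continuous, since the only preimages to check are $\varnothing$ and the whole domain, and both are open. In particular, the multiplications $(a,b)\mapsto a\cdot b$ and $(a,b)\mapsto a\circ b$ from $B\times B$ (whose product topology is again indiscrete) to $B$ are continuous. The same holds for the two inversion maps $a\mapsto a^{-1}$ and $a\mapsto \overline a$, the latter denoting the inverse in $(B,\circ)$. Hence $(B,\tau,\cdot)$ and $(B,\tau,\circ)$ are topological groups with the same underlying topology, and $(B,\tau,\cdot,\circ)$ is a topological skew brace.

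Next come the topological properties. $B$ is compact because every open cover must contain $B$ itself, since $B$ is nonempty as it contains the identity $e$; a single member therefore covers. $B$ is connected because the only clopen subsets are $\varnothing$ and $B$. For non-Hausdorffness, first note that $B$ has at least two elements. Indeed, a trivial group is solvable, so the assumption that $(B,\circ)$ is not solvable forces $|B|\ge 2$. Picking distinct $a,b\in B$, the only open set containing $a$ is $B$, which also contains $b$, so $a$ and $b$ cannot be separated by disjoint open sets.

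The proof is a routine verification with no real obstacle. The only point requiring a moment's care is the non-Hausdorff claim, which needs $|B|\ge2$, and this is supplied by non-solvability. Finally, we note that the construction is consistent with the rest of the paper: the hypotheses of Theorem~\ref{thm:lc-solvable} other than Hausdorffness hold, since $B$ is compact and hence locally compact, connected, and has solvable additive group. Yet $(B,\circ)$ is not solvable. This shows the Hausdorff assumption cannot be dropped, provided an abstract skew brace with the stated property exists, which is the content of Corollary~\ref{cor:nasybullov}.
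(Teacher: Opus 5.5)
Your proposal is correct and follows the paper's proof essentially verbatim: continuity of both group laws and both inversions is automatic for the indiscrete topology, compactness and connectedness are immediate, and non-Hausdorffness reduces to $|B|\ge 2$. You slightly tighten the paper's argument by deriving $|B|\ge 2$ from the non-solvability of $(B,\circ)$, since the trivial group is solvable. The paper instead states the non-Hausdorff conclusion only conditionally, ``if $|B|>1$''.
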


\begin{proof}
With the indiscrete topology every map from $B$ to $B$ is continuous. Hence both
group operations and both inversion maps are continuous, so
$(B,\tau,\cdot,\circ)$ is a topological skew brace.

The space $(B,\tau)$ is compact, because every open cover contains $B$ itself,
and it is connected, because it has no non-trivial clopen subsets. If
$|B|>1$, it is not Hausdorff. The underlying algebraic groups are unchanged, so
$(B,\cdot)$ remains solvable while $(B,\circ)$ remains non-solvable.
\end{proof}

\begin{corollary}\label{cor:nasybullov}
The non-Hausdorff compact connected topological Byott--Vendramin solvability
problem has a concrete counterexample obtained from Nasybullov's brace.
\end{corollary}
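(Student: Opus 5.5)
The plan is to apply Proposition~\ref{prop:indiscrete} to a specific finite skew brace from the literature, namely the one constructed by Nasybullov \cite{Nasybullov}. Since Proposition~\ref{prop:indiscrete} turns any abstract skew brace with solvable additive group and non-solvable multiplicative group into a compact connected non-Hausdorff topological skew brace, the corollary reduces to exhibiting one such abstract skew brace.

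The steps are as follows.
\begin{enumerate}[label=\textup{(\arabic*)}]
\item Recall the relevant construction from \cite{Nasybullov}. It is a skew brace $B_0=(B_0,\cdot,\circ)$ in which $(B_0,\cdot)$ is solvable and $(B_0,\circ)$ is not solvable. In the finite setting one of the two groups is necessarily infinite or the example requires a specific framework, so I will state precisely which example is meant. I will also note that the example is not a counterexample to the finite Byott--Vendramin question; otherwise that question would not be open.
\item Observe that $|B_0|>1$, which holds since a non-solvable group is nontrivial.
\item Endow $B_0$ with the indiscrete topology and invoke Proposition~\ref{prop:indiscrete}. This gives a compact connected topological skew brace that is not Hausdorff, has solvable additive group, and has non-solvable multiplicative group.
\end{enumerate}
This is exactly a counterexample to the topological solvability statement once the Hausdorff hypothesis is dropped, in contrast with Theorem~\ref{thm:lc-solvable}.

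The only genuine point of care is step (1): correctly citing the algebraic example and checking its stated properties, namely which group is solvable and which is not. Everything topological is already handled by Proposition~\ref{prop:indiscrete}. If the cited example turned out to have the roles reversed, I would instead use the opposite skew brace or another known example with solvable additive and non-solvable multiplicative group; the indiscrete-topology argument is unchanged.
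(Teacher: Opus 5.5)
Your reduction is the same as the paper's: the topology is handled entirely by Proposition~\ref{prop:indiscrete}, so the corollary amounts to exhibiting one abstract skew brace with solvable additive group and non-solvable multiplicative group. That exhibition is, however, the whole content of the corollary. Your proposal defers it (``I will state precisely which example is meant'') and never carries it out, so as written it contains the correct reduction but not the corollary itself.

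What you do say about the example is also wrong. You call it a ``specific finite skew brace''. That is impossible for the reason you give yourself a few lines later: a finite example would settle the open Byott--Vendramin question. The sentence saying that in the finite setting one of the two groups is ``necessarily infinite'' makes no sense, since both groups live on the same underlying set.

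The missing step is short. Nasybullov's brace $A=\bigoplus_{n\ge 2}A_n$ is infinite. Its additive group is $\bigoplus_{\mathbb N}\Z$, which is abelian, and its multiplicative group is $\bigoplus_{n\ge 2}UT_n(\Z)$. One may take $A_n$ to be the adjoint brace $x\circ y=x+y+xy$ on strictly upper triangular integer $n\times n$ matrices, with $x\mapsto I_n+x$ identifying $(A_n,\circ)$ with $UT_n(\Z)$.

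The multiplicative group is not solvable, for three reasons taken together:
\begin{enumerate}
\item it contains each $UT_n(\Z)$ as a subgroup;
\item a subgroup of a group of derived length $d$ has derived length at most $d$;
\item $\operatorname{dl}(UT_n(\Z))=\lceil\log_2 n\rceil$ is unbounded.
\end{enumerate}

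Your fallback of passing to the opposite skew brace would not work. The opposite brace $(B,\cdot^{\mathrm{op}},\circ)$, with $a\cdot^{\mathrm{op}}b=b\cdot a$, has the same multiplicative group as $(B,\cdot,\circ)$. Its additive group is isomorphic to $(B,\cdot)$ via $x\mapsto x^{-1}$. So it cannot exchange which of the two groups is solvable. The fallback is not needed here in any case, since Nasybullov's example already has the roles the right way round.
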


\begin{proof}
Nasybullov constructs a brace
\[
A=\bigoplus_{n\ge 2} A_n
\]
with additive group
\[
(A,\cdot)\cong \bigoplus_{\mathbb N}\Z,
\]
which is abelian, and multiplicative group
\[
(A,\circ)\cong \bigoplus_{n\ge 2} UT_n(\Z).
\]
The latter is not solvable, because the derived lengths of the unitriangular
groups $UT_n(\Z)$ are unbounded \cite[Example~3.2]{Nasybullov}. Applying
Proposition~\ref{prop:indiscrete} gives the desired compact connected
non-Hausdorff counterexample.
\end{proof}

\begin{remark}
This construction does not produce a Lie skew brace in the sense of
Damele--Loi \cite{DameleLoi2026}. A Lie skew brace is defined on a real smooth manifold carrying two
compatible Lie group structures \cite{DameleLoi2026}; in particular, its underlying
space is Hausdorff and locally Euclidean.
\end{remark}

\subsection{Failure without local compactness}\label{subsec:non-locally-compact}

We next show that local compactness cannot be omitted, even in the Hausdorff
connected case and even when the additive group is abelian.

\begin{proposition}\label{prop:banach-counterexample}
There exists a connected Hausdorff topological left brace
\[
B=(B,+,\circ)
\]
such that $(B,+)$ is abelian, the underlying space $B$ is not locally compact,
and $(B,\circ)$ is not solvable.
\end{proposition}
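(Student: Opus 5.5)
The construction will be a brace on an infinite-dimensional topological vector space whose multiplicative group contains unitriangular-type blocks of unbounded derived length, in the spirit of Nasybullov's example used in Corollary~\ref{cor:nasybullov}. The basic finite-dimensional building block is standard. For each $n\ge 2$, the group $G_n:=UT_n(\R)$ of real upper unitriangular matrices is a simply connected nilpotent Lie group. It acts simply transitively on $V_n:=\R^{n(n-1)/2}$, for instance via the bijection $g\mapsto g-I$ onto strictly upper triangular matrices. Concretely, I would use the brace on $N_n$, the strictly upper triangular $n\times n$ matrices, given by
\[
x\circ y:=x+y+xy .
\]
This is the adjoint group of the nilpotent radical ring $N_n$, and a radical ring always yields a left brace. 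Here $(N_n,+)\cong\R^{n(n-1)/2}$, and $(N_n,\circ)\cong UT_n(\R)$ via $x\mapsto I+x$. Both operations are polynomial, so this is a Lie (hence topological) left brace. Its multiplicative group has derived length roughly $\lceil\log_2 n\rceil$, which is unbounded in $n$.

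Next I glue the blocks into a single radical ring that is a Banach algebra. The plan is to take the $\ell^1$-direct sum (or $c_0$-sum)
\[
A:=\Bigl\{(x_n)_{n\ge2}: x_n\in N_n,\ \textstyle\sum_n \|x_n\|<\infty\Bigr\},
\]
with coordinatewise product. Each $N_n$ carries a submultiplicative norm, for example the operator norm. The product in $A$ is continuous with $\|xy\|\le\|x\|\|y\|$, so $A$ is a Banach algebra. Every element is quasi-regular: coordinatewise, $x_n\circ y_n=0$ has the solution $y_n=\sum_{k\ge1}(-x_n)^k$, a finite sum.

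The main obstacle is that the quasi-inverse must stay in $A$ and depend continuously on $x$. Nilpotency indices grow with $n$, so the sum $\sum_k\|x_n\|^k$ is not uniformly controlled. To fix this I would rescale the norms: choose on $N_n$ a submultiplicative norm in which $\|x^k\|$ is uniformly small. Alternatively, and more simply, I would restrict to the ball structure by noting that the group of quasi-invertible elements in a Banach algebra is open. Even simpler is a finitely supported sum with an inductive-limit or normed topology. With $A=\bigoplus_n N_n$ (finite support) and the $\ell^1$-norm, quasi-inverses exist because each element lives in finitely many blocks. Continuity of $x\mapsto x'$ can then be checked via the identity
\[
x'-y'=-(1+x')(x-y)(1+y')
\]
written in the unitization, together with a local bound on $\|x'\|$ near a point. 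I expect this local boundedness to be the delicate point. If it fails for the plain $\ell^1$-norm, I would switch to the finest locally convex topology on the countable-dimensional space $\bigoplus_n N_n$. In that topology every multilinear map is continuous on each finite-dimensional piece, and the space is the inductive limit of the $\bigoplus_{n\le m}N_n$. I would verify continuity of $\circ$ and quasi-inversion directly there, using that each $x\mapsto x'$ is polynomial on each finite stage.

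Finally I check the required properties. The space is Hausdorff and path-connected, since it is a topological vector space and $t\mapsto tx$ joins $0$ to $x$. It is not locally compact, because an infinite-dimensional Hausdorff topological vector space is never locally compact (Riesz). The additive group is abelian. The multiplicative group contains the copies $UT_n(\R)$ for all $n$, whose derived lengths are unbounded, so it is not solvable.
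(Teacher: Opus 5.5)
Your construction is the right one: the $c_0$-variant of your $A$ is exactly the paper's Banach algebra $R=c_0(J_n)$ with $x\circ y=x+y+xy$. Your checks of the brace identity, Hausdorffness, connectedness, non-local-compactness (Riesz) and non-solvability via the blocks $UT_n(\R)$ agree with the paper.

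The gap is the one step that makes this a \emph{topological} brace. You never show that the quasi-inverse $x'$ of an element $x\in A$ lies in $A$ and depends continuously on $x$. You flag this as the main obstacle and sketch three workarounds: renorming the blocks, finitely supported sums with the $\ell^1$-norm, and the finest locally convex topology. You complete none of them, and for the normed variant you say the needed local bound might fail. The last fallback is also not justified as stated. Knowing that $\circ$ and $x\mapsto x'$ are polynomial on each $E_m=\bigoplus_{n\le m}N_n$ gives continuity on $E$ and on $E\times E$ only once you know two further facts. First, the finest locally convex topology on a countable-dimensional space must equal the direct-limit topology of the $E_m$ in the category of topological spaces. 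Second, this direct limit must commute with the finite product $E\times E$. Both are true, but they are nontrivial $k_\omega$-space facts you would have to supply.

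The missing observation is that the obstacle is illusory in the Banach setting. For $x\in A$ one has $\|x_n\|\to0$, so $\|x_n\|<1/2$ for all but finitely many $n$. For those $n$, submultiplicativity gives $\|x_n'\|\le\sum_{k\ge1}\|x_n\|^k\le 2\|x_n\|$. The growth of the nilpotency index is irrelevant, because this geometric bound is uniform in $n$. Hence $x'\in A$, i.e.\ $1+A$ lies in the unit group of the unitization $A^\sharp$. The paper reaches the same conclusion by writing $x=y+z$ with $y$ finitely supported and $z$ a small tail, then factoring $1+x=(1+y)\bigl(1+(1+y)^{-1}z\bigr)$ and applying a Neumann series.

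Continuity of $x\mapsto x'$ is then just continuity of inversion on the unit group of a unital Banach algebra, which is what the paper cites. Alternatively, your own identity already gives the local bound you were worried about. From $1+x'=(1+y')-(1+x')(x-y)(1+y')$ one gets $\|1+x'\|\le 2\|1+y'\|$ whenever $\|x-y\|\,\|1+y'\|\le 1/2$. Hence $\|x'-y'\|\le 2\|1+y'\|^2\|x-y\|$. Since quasi-inverses are unique, the same estimate computed in the completion also settles your finitely supported normed variant.
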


\begin{proof}
For each $n\ge 2$, let
\[
J_n:=\{A\in M_n(\R): A \text{ is strictly upper triangular}\},
\]
endowed with the operator norm. Consider the Banach algebra
\[
R:=c_0(J_n)
=
\Bigl\{x=(x_n)_{n\ge 2}: x_n\in J_n,\ \|x_n\|\to 0\Bigr\}
\]
with coordinatewise operations and supremum norm. Define
\[
x\circ y:=x+y+xy.
\]
Then $(R,+)$ is abelian.
Let $R^\sharp=\R 1\oplus R$ be the unitization. The map $x\mapsto 1+x$
identifies $(R,\circ)$ with $1+R\subseteq R^\sharp$. We show that every element
of $1+R$ is invertible. Given $x=(x_n)\in R$, write $x=y+z$, where $y$ has
finite support and $z$ is a sufficiently small tail. Then $1+y$ is invertible,
because $y$ is nilpotent, and choosing the tail so that
\[
\|(1+y)^{-1}z\|<1
\]
shows by the Neumann series that
\[
1+x=(1+y)\bigl(1+(1+y)^{-1}z\bigr)
\]
is invertible. Hence $(R,\circ)$ is a group.

The operation $\circ$ is continuous, and inversion is continuous because
$1+R$ is contained in the group of invertible elements of the unital Banach
algebra $R^\sharp$ \cite[Chapter~2]{BonsallDuncan}. The brace identity follows
from distributivity:
\[
x\circ (y+z)=(x\circ y)-x+(x\circ z).
\]
Thus $(R,+,\circ)$ is a topological left brace.
The Banach space $R$ is Hausdorff and path-connected, hence connected. It is
infinite-dimensional, and therefore not locally compact.
Finally, for each $n\ge 2$, the subgroup of elements supported only in the
$n$-th coordinate is isomorphic, under $x\mapsto I_n+x_n$, to $UT_n(\mathbb R)$.
It is well known, and follows for instance from the description of the derived
series of unitriangular groups, that
\[
\operatorname{dl}(UT_n(\mathbb R))=\lceil \log_2 n\rceil .
\]
Hence these subgroups have unbounded derived length. If $(R,\circ)$ were
solvable, then all its subgroups would have derived length bounded by
$\operatorname{dl}(R,\circ)$, a contradiction. Thus $(R,\circ)$ is not solvable.

\end{proof}

\begin{remark}
The brace constructed in Proposition~\ref{prop:banach-counterexample} is
naturally an infinite-dimensional Banach--Lie left brace. However, it is not a
finite-dimensional Lie skew brace, and in particular it is not locally compact.
\end{remark}

\subsection{Failure without connectedness}\label{subsec:non-connected}

Finally, we show that connectedness cannot be omitted, even in the compact
Hausdorff case.

\begin{theorem}\label{thm:counterexample-compact}
There exists a compact Hausdorff totally disconnected topological two-sided
brace
\[
B=(B,+,\circ)
\]
such that $(B,+)$ is abelian, while $(B,\circ)$ is prosolvable but not
solvable.
\end{theorem}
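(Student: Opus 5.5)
The plan is to build the example as an inverse limit of finite two-sided braces, mirroring the Banach construction of Proposition~\ref{prop:banach-counterexample} but replacing the $c_0$-sum by a full product. For each $n\ge 2$ let $J_n$ be the ring of strictly upper triangular $n\times n$ matrices over $\mathbb F_2$. Any ring $J$ that is nil and possibly non-unital gives a two-sided brace $(J,+,\circ)$ with $x\circ y=x+y+xy$. Here $x\mapsto 1+x$ identifies $(J,\circ)$ with the unit group $1+J$, and the brace identity $x\circ(y+z)=x\circ y-x+x\circ z$ and its right-handed analogue follow from distributivity on both sides. Applying this to $J_n$ shows $(J_n,\circ)\cong UT_n(\mathbb F_2)$, which is a finite $2$-group and hence solvable.

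Next I will set
\[
B:=\prod_{n\ge 2} J_n
\]
with coordinatewise operations and the product topology of the discrete finite sets $J_n$. The steps, in order, are as follows.
\begin{enumerate}
\item By Tychonoff, $B$ is compact, Hausdorff and totally disconnected, being a product of finite discrete spaces.
\item Both $+$ and $\circ$ are computed coordinatewise, so they are continuous, and inversion is continuous for the same reason.
\item $B$ is a two-sided brace because each factor $J_n$ is one and the identities are checked coordinatewise.
\item $(B,+)$ is abelian.
\item $(B,\circ)\cong\prod_n UT_n(\mathbb F_2)$ is the inverse limit of the finite solvable groups $\prod_{n\le N}UT_n(\mathbb F_2)$, so it is prosolvable. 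Each finite-stage projection is a continuous surjective homomorphism with open kernel, so these kernels form a basis of open normal subgroups with solvable quotients.
\end{enumerate}

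The final step is non-solvability. Each factor $UT_n(\mathbb F_2)$ embeds in $(B,\circ)$ as the elements supported in the $n$-th coordinate. The derived length of $UT_n(\mathbb F_2)$ is $\lceil\log_2 n\rceil$, as in Proposition~\ref{prop:banach-counterexample}, so these lengths are unbounded. A solvable group would bound the derived length of all its subgroups, so $(B,\circ)$ is not solvable.

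I expect the main obstacle to be justifying the unbounded derived length over $\mathbb F_2$ rather than $\mathbb R$. I would handle it through the lower central filtration. Let $U_k$ be the unitriangular matrices with zeros on the first $k-1$ superdiagonals. Then $[U_k,U_l]\subseteq U_{k+l}$, and $[U_k,U_k]$ contains enough elementary commutators $[e_{ij}(1),e_{jl}(1)]=e_{il}(1)$ with $j-i,\,l-j\ge k$ to generate $U_{2k}$ modulo higher terms. By induction the $m$-th derived subgroup is $U_{2^m}$, which is nontrivial while $2^m<n$. For the plan it actually suffices to show that the $m$-th derived subgroup is nontrivial whenever $2^m<n$. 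That follows by exhibiting a nonzero elementary commutator $e_{1,2^m+1}(1)$ in it, built recursively from commutators of elementary matrices at each level.
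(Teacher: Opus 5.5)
Your proposal is correct and is essentially the paper's proof with $p=2$. The paper takes $B=\prod_{n\ge 2}J_n(\mathbb F_p)$ with the adjoint operation $x\circ y=x+y+xy$ and the product topology, and deduces non-solvability from the unbounded derived lengths of the factors $UT_n(\mathbb F_p)$; the only addition on your side is that you sketch why $UT_n(\mathbb F_2)^{(m)}=U_{2^m}$ via $[e_{ij}(1),e_{jl}(1)]=e_{il}(1)$, whereas the paper simply asserts $\operatorname{dl}(UT_n(\mathbb F_p))=\lceil\log_2 n\rceil$.
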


\begin{proof}
Fix a prime number $p$. For every $n\ge 2$, let $J_n(\mathbb F_p)$ be the ring
of strictly upper triangular $n\times n$ matrices over $\mathbb F_p$. Since
$J_n(\mathbb F_p)$ is nilpotent, it is Jacobson radical, and therefore its
additive group together with the adjoint operation
\[
x\circ y:=x+y+xy
\]
forms a finite two-sided brace. Denote this brace by
\[
A_n=(J_n(\mathbb F_p),+,\circ).
\]
The map
\[
A_n\longrightarrow UT_n(\mathbb F_p),
\qquad
x\longmapsto I_n+x,
\]
is an isomorphism from $(A_n,\circ)$ onto $UT_n(\mathbb F_p)$.

Set
\[
B:=\prod_{n\ge 2} A_n
\]
with the product topology and coordinatewise operations. Since each $A_n$ is
finite and discrete, $B$ is compact, Hausdorff, and totally disconnected. Its
additive group is abelian.

Moreover,
\[
(B,\circ)\cong \prod_{n\ge 2}UT_n(\mathbb F_p).
\]
Each factor is finite and solvable, so $(B,\circ)$ is prosolvable. However, the
derived length of the unitriangular group satisfies
\[
\operatorname{dl}(UT_n(\mathbb F_p))=\lceil\log_2 n\rceil .
\]
Thus the derived lengths of the finite factors are unbounded. If the direct
product were solvable of derived length $d$, then every factor would have
derived length at most $d$, a contradiction. Hence $(B,\circ)$ is not solvable.
\end{proof}

\begin{remark}
Since $B$ is totally disconnected, its identity component is trivial. Thus the
connectedness assumption in Theorem~\ref{thm:lc-solvable} is essential.
\end{remark}


\begin{thebibliography}{99}

\bibitem{Auslander}
L.~Auslander,
\emph{Simply transitive groups of affine motions},
Amer. J. Math. \textbf{99} (1977), no.~4, 809--826.

\bibitem{Baues}
O.~Baues,
\emph{Infra-solvmanifolds and rigidity of subgroups in solvable linear algebraic groups},
Topology \textbf{43} (2004), no.~4, 903--924.

\bibitem{BonsallDuncan}
F.~F.~Bonsall and J.~Duncan,
\emph{Complete Normed Algebras},
Ergebnisse der Mathematik und ihrer Grenzgebiete, vol.~80,
Springer-Verlag, Berlin--Heidelberg--New York, 1973.



\bibitem{Dere}
J.~Der\'e,
\emph{NIL-affine crystallographic actions of virtually polycyclic groups},
Transform. Groups \textbf{26} (2021), no.~4, 1217--1240.

\bibitem{DameleLoi2026}
M.~Damele and A.~Loi,
\emph{Structural and rigidity properties of Lie skew braces},
J. Algebra \textbf{695} (2026), 356--383.
\href{https://doi.org/10.1016/j.jalgebra.2026.01.048}{doi:10.1016/j.jalgebra.2026.01.048}.


\bibitem{GorshkovNasybullov}
I.~Gorshkov and T.~Nasybullov,
\emph{Finite skew braces with solvable additive group},
J. Algebra \textbf{574} (2021), 172--183.

\bibitem{Guarnieri2017}
L.~Guarnieri and L.~Vendramin,
\emph{Skew braces and the Yang--Baxter equation},
Math. Comp. \textbf{86} (2017), no.~307, 2519--2534.
\href{https://doi.org/10.1090/mcom/3161}{doi:10.1090/mcom/3161}.

\bibitem{HofmannKramer}
K.~H.~Hofmann and L.~Kramer,
\emph{Transitive actions of locally compact groups on locally contractible spaces},
J. Reine Angew. Math. \textbf{702} (2015), 227--243.

\bibitem{HofmannMorrisProLie}
K.~H.~Hofmann and S.~A.~Morris,
\emph{The Lie Theory of Connected Pro-Lie Groups. A Structure Theory for Pro-Lie Algebras, Pro-Lie Groups, and Connected Locally Compact Groups},
EMS Tracts in Mathematics, vol.~2, European Mathematical Society, Z\"urich, 2007.



\bibitem{MorrisCompactGroups}
K.~H.~Hofmann and S.~A.~Morris,
\emph{The Structure of Compact Groups}, 3rd ed.,
De Gruyter Studies in Mathematics, vol.~25, Walter de Gruyter, Berlin, 2013.


\bibitem{Nasybullov}
T.~Nasybullov,
\emph{Connections between properties of the additive and the multiplicative groups of a two-sided skew brace},
J. Algebra \textbf{540} (2019), 156--167.

\bibitem{Smoktunowicz2018}
A.~Smoktunowicz and L.~Vendramin,
\emph{On skew braces},
J. Comb. Algebra \textbf{2} (2018), no.~1, 47--86.
\href{https://doi.org/10.4171/JCA/2-1-3}{doi:10.4171/JCA/2-1-3}



\end{thebibliography}
\end{document}